\newlength{\rulebreite}
\def\timesover#1#2#3{\ \xymatrix@1@=0pt@M=0pt{ _{#1}&\times&_{#2} \\& ^{#3}&}\ }
\def\otimesover#1#2#3{\ \xymatrix@1@=0pt@M=0pt{ _{#1}&\otimes&_{#2} \\& ^{#3}&}\ }
\theoremstyle{plain}
\newtheorem{thm}{Theorem}
\newtheorem{lem}[thm]{Lemma}
\theoremstyle{definition}
\newtheorem{rmk}[thm]{Remark}
\numberwithin{equation}{section}
\newcommand{\Pic}{{\rm Pic}}
\newcommand{\Spec}{{\rm Spec \,}}
\newcommand{\inj}{\hookrightarrow}
\newcommand{\sE}{{\mathcal E}}
\newcommand{\sL}{{\mathcal L}}
\newcommand{\sM}{{\mathcal M}}
\newcommand{\sN}{{\mathcal N}}
\newcommand{\sO}{{\mathcal O}}
\newcommand{\G}{{\mathbb G}}
\newcommand{\Z}{{\mathbb Z}}
\begin{document}

\title[Brauer group of desingularized moduli spaces]{The Brauer group
of desingularization of moduli spaces of vector bundles over a curve}

\author[I. Biswas]{Indranil Biswas}

\address{School of Mathematics, Tata Institute of Fundamental
Research, Homi Bhabha Road, Bombay 400005, India}

\email{indranil@math.tifr.res.in}

\author[A. Hogadi]{Amit Hogadi}

\address{School of Mathematics, Tata Institute of Fundamental
Research, Homi Bhabha Road, Bombay 400005, India}

\email{amit@math.tifr.res.in}

\author[Y. I. Holla]{Yogish I. Holla}

\address{School of Mathematics, Tata Institute of Fundamental
Research, Homi Bhabha Road, Bombay 400005, India}

\email{yogi@math.tifr.res.in}

\subjclass[2000]{14H60, 14F22}

\keywords{Semistable bundle, moduli space, Brauer group}

\date{}

\begin{abstract}
Let $C$ be an irreducible smooth projective curve, of genus
at least two, defined over an algebraically closed field of
characteristic zero. For a fixed line bundle ${\mathcal L}$ on $C$,
let $M_C(r,{\mathcal L})$ be the coarse moduli space of 
semistable vector bundles $E$ over $C$ of rank $r$ with $\bigwedge^r 
E\,=\, {\mathcal L}$. We show that the Brauer group of any
desingularization of ${M}_C(r,{\mathcal L})$ is trivial. 
\end{abstract}

\maketitle

\section{Introduction} 

Let $k$ be an algebraically closed field of characteristic zero. Let 
$C/k$ be an irreducible smooth projective curve of genus $g$, with
$g\,\geq\, 2$. Let $\sL\,\in\, 
\Pic^d(C)$ be a line bundle o $C$ of degree $d$; fix an integer
$r\, \geq\, 2$. Let $M\, =\, {M}_C(r,{\mathcal L})$ denote
the moduli space of semistable vector bundles on $C$ of rank $r$ 
and determinant $\sL$. It is known that $M$ is a unirational normal
projective variety. Up to isomorphism $M$ depends only on the class of 
$d$ modulo $r$ and not on the actual line bundle $\sL$. This variety $M$ is
known to be rational if $r$ is coprime
to $d$ \cite[p. 520, Theorem 1.2]{kingsch}; except for the only
case of $g\,=\, r\,=\, d
\,=\, 2$ when $M$ is known to be ${\mathbb P}^3_k$, in all other
cases, where $r$ is not coprime
to $d$, it is unknown whether $M$ is stably rational.

For any projective variety $X/k$ to be rational (or even stably 
rational), it is necessary for the Brauer group ${\rm Br}(\widetilde{X})$ to vanish, where 
$\widetilde{X}\longrightarrow X$ is a desingularization. This motivated us to study
the Brauer group of the desingularization of $M$.

The following result is proved here:

\begin{thm}\label{mainthm}
Let $\widetilde{M}\,\longrightarrow\, {M}_C(r,{\mathcal L})$ be any 
desingularization of ${M}_C(r,{\mathcal L})$. Then ${\rm 
Br}(\widetilde{M})$ is trivial.
\end{thm}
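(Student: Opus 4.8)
The plan is to reduce the statement to a torsion computation and then to a question about a single Brauer class on the stable locus. Since the Brauer group is a birational invariant of smooth projective varieties over a field of characteristic zero, ${\rm Br}(\widetilde M)$ does not depend on the chosen desingularization, so I may assume $\widetilde M\longrightarrow M$ is an isomorphism over the smooth locus of $M$, in particular over the stable locus $M^s$. Because $M$ is unirational, $\widetilde M$ is a smooth projective rationally connected variety; hence $\pi_1(\widetilde M)=1$ and $H^i(\widetilde M,\sO_{\widetilde M})=0$ for $i>0$. By the Lefschetz $(1,1)$ theorem $H^2(\widetilde M,\Z)$ is torsion-free and equals $\Pic(\widetilde M)$, so the Kummer sequences together with the exponential sequence give, for every $n$, an isomorphism ${\rm Br}(\widetilde M)[n]\cong H^3(\widetilde M,\Z)[n]$; therefore
\[
{\rm Br}(\widetilde M)\;\cong\;H^3(\widetilde M,\Z)_{\rm tors},
\]
a finite group, and it is enough to prove that $H^3(\widetilde M,\Z)$ has no torsion.

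Next I would pass to the stable locus. For the regular scheme $\widetilde M$, purity for Brauer groups together with the Auslander--Goldman theorem identifies ${\rm Br}(\widetilde M)$ with $\bigcap_{x}{\rm Br}(\sO_{\widetilde M,x})$ inside ${\rm Br}(k(M))$, the intersection taken over codimension-one points $x$; since the codimension-one points of $M^s$ form a subset of those of $\widetilde M$, the restriction map ${\rm Br}(\widetilde M)\longrightarrow{\rm Br}(M^s)$ is injective. Moreover, in every case in which $M$ is genuinely singular, its singular locus is contained in the strictly semistable locus and has codimension at least two (the low-genus, low-rank exceptions being already smooth, e.g. $M\cong{\mathbb P}^3_k$), so purity on the smooth locus $M^{\rm sm}$ gives ${\rm Br}(M^s)={\rm Br}(M^{\rm sm})$ and $\pi_1(M^s)=\pi_1(M)=1$. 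Thus the theorem becomes: no nonzero class of ${\rm Br}(M^s)$ is unramified along every exceptional divisor of $\widetilde M$.

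The heart of the argument, then, is the group ${\rm Br}(M^s)$ and the ramification of its classes. Writing $M^s=R^s/{\rm PGL}_N$ with $R^s$ smooth and rational and ${\rm PGL}_N$ acting freely, and using the central extensions $1\to\mu_N\to{\rm SL}_N\to{\rm PGL}_N\to1$ and $1\to\G_m\to{\rm GL}_N\to{\rm PGL}_N\to1$ together with $\Pic(M)=\Z$, one shows that ${\rm Br}(M^s)$ is cyclic, generated by the class $\beta$ obstructing a Poincar\'e bundle on $C\times M^s$, of order dividing $h:=\gcd(r,\deg\sL)$. Here it helps to bring in the Hecke correspondence to a coprime moduli space $M'=M_C(r,\sL')$, with $\gcd(r,\deg\sL')=1$ (smooth, rational, carrying a Poincar\'e bundle, so ${\rm Br}(M')=0$): the iterated ${\mathbb P}^{r-1}$-bundle $\mathcal H\to M'$ is a rational variety which, over $M^s$, is precisely the Brauer--Severi fibration attached to $\beta$; this realizes $\langle\beta\rangle$ as the kernel of ${\rm Br}(M^s)\to{\rm Br}(\mathcal H|_{M^s})$ and pins the order of $\beta$ down to $h$. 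It then remains to prove that $\beta$ — and hence every nonzero element of the cyclic group $\langle\beta\rangle={\rm Br}(M^s)$ — is ramified along some exceptional divisor; equivalently, that there is a divisorial valuation $v$ of $k(M)/k$ centred on the strictly semistable locus with residue $\partial_v\beta$ of order exactly $h$. I would do this by analysing the local structure of $M$, and of the twisted universal family, near the generic point of a maximal strictly semistable stratum, where the representative is polystable of the form $V_1\oplus V_2$ with $V_1,V_2$ stable: there $M$ carries a determinantal (conical) singularity whose blow-up has exceptional divisor fibred over a product of moduli spaces of smaller rank, and the residue of $\beta$ along it is governed by the Poincar\'e-bundle obstruction of the larger summand, whose index along that stratum is a multiple of $h$. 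Hence $\partial_v\beta$ has order $h$, so ${\rm Br}(\widetilde M)\cap\langle\beta\rangle=0$, i.e. ${\rm Br}(\widetilde M)=0$.

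The main obstacle is this last step. Birational invariance, rational connectedness, the reduction to $H^3_{\rm tors}$, and the descent to $M^s$ are formal; what must be carried out by hand is to relate the moduli Brauer class $\beta$ to the combinatorics of the strictly semistable strata and to compute its residues along the exceptional divisors produced by resolving — equivalently, to make precise the transverse local model of $M$, and of the (necessarily twisted, since $h>1$) universal family, along the strictly semistable locus. This is exactly the point where the fine geometry of $M_C(r,\sL)$, rather than the soft fact that it is unirational, has to be used: the coprimality obstruction encoded by $\beta$ is precisely what must be shown to disappear upon desingularizing.
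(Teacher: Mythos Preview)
Your reduction coincides with the paper's: inject ${\rm Br}(\widetilde M)\hookrightarrow{\rm Br}(M^s)$ (the stable locus, the paper's $M_0$; this is Lemma~5 there), quote from \cite{bbgn} that ${\rm Br}(M^s)$ is cyclic of order $n=\gcd(r,d)$ generated by the Poincar\'e obstruction $\alpha'$, and then show that no nonzero multiple $\ell\alpha'$ extends to $\widetilde M$. The $H^3_{\rm tors}$ detour is correct but not needed.

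The gap is exactly where you place it, and your sketch does not close it. Residues along exceptional divisors depend on the chosen resolution, and explicit resolutions of $M$ are available only for $r=2$ --- precisely the case already treated by Nitsure and Balaji via such direct computations, as the paper notes. Your heuristic that the residue over a $V_1\oplus V_2$ stratum is ``governed by the Poincar\'e obstruction of the larger summand, whose index is a multiple of $h$'' is neither justified nor plausible in general: the residue has order \emph{dividing} $n$, and for instance with $r=4$, $d=0$ the two-block strata have summand obstructions of order $2$, not $4$. One would at least have to pass to the deepest stratum $W^{\oplus n}$ and argue much more carefully; nothing like this is carried out.

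The paper sidesteps all of this by replacing divisorial valuations of $k(M)$ with an \emph{external} test valuation, so that no information about the exceptional locus of $\widetilde M$ is ever needed. Take $K=k(x,y)$ with the $(x)$-adic valuation $v$ and the cyclic algebra $D=(x,y)_\zeta$ of index $n$; then $\ell[D]\in{\rm Br}(K)$ is unramified at $v$ iff $n\mid\ell$ (Lemma~4). Twisting the fixed polystable bundle $W^{\oplus n}$ by the rank-$n$ twisted sheaf associated to $D$ yields a $K$-point $x_0$ of the rigidified stack $\sN$ with $x_0^*\alpha=[D]$, lying over the strictly semistable point. Through a smooth curve $Y/K$ in $\sN$ meeting $M_0$ (Lemma~3) one obtains $h\colon Y\to\widetilde M$ and $y_0\in Y(K)$ over $x_0$. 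If $\ell\alpha'$ extended to $\beta\in{\rm Br}(\widetilde M)$, then $(h\circ y_0)^*\beta=\ell[D]$ in ${\rm Br}(K)$; but since $\widetilde M$ is \emph{proper}, the map $h\circ y_0\colon\Spec K\to\widetilde M$ extends over the DVR at $v$, so this class is unramified, forcing $n\mid\ell$. Properness of $\widetilde M$ thus substitutes entirely for any knowledge of its exceptional geometry --- this is the idea your outline is missing.
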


We now give a brief idea of the proof of it. For any possible nonzero 
class $\alpha \,\in\, {\rm Br}(\widetilde{M})\setminus\{0\}$, we show
that there exists a discrete valued field $K$, and a morphism 
$\varphi\, :\, \Spec(K)\,\longrightarrow\, \widetilde{M}$, such
that $\varphi^*\alpha\, \in\, {\rm Br}(K)$ is ramified. 
This morphism $\varphi$
is constructed explicitly out of a suitable 
family of vector bundles on $C$ parameterized by a $\G_m$--gerbe over 
$\Spec(K)$. On the other hand, since $\widetilde{M}$ is a proper 
variety, for any $\xi\, \in\, {\rm Br}(\widetilde{M})$, the
pullback $\varphi^*\xi\, \in\, {\rm Br}(K)$ must be unramified 
(this is because the morphism $\Spec(K)\,\longrightarrow\, M$ extends
to the discrete valuation ring). Thus $\alpha$ must be zero.

Theorem \ref{mainthm} was proved earlier in \cite{Ni} under the
assumption that $r\, =\, 2$ (see \cite[p. 309, Theorem 1]{Ni});
this theorem of Nitsure was also proved later in \cite{Ba}.
When $r\, =\, 2$, explicit desingularizations of $M$ are
available; these desingularizations are crucially used in \cite{Ni},
\cite{Ba}.

\medskip
\noindent {\bf Acknowledgement.} We thank the referee for pointing out an 
incorrect statement in an earlier version and for helpful comments. We thank 
Najmuddin Fakhruddin 
and the referee for independently pointing out that an argument in an 
earlier version involving resolution of singularities of stacks could be 
avoided altogether.

\section{The stable locus}\label{se2}

We continue with the notation of the introduction.

Let $\sM\, =\, {\sM}_C(r,{\mathcal L})$ be the moduli stack of 
semistable vector bundles over $C$ of rank $r$ and determinant
$\mathcal L$. Let
\begin{equation}\label{p}
p\, :\, {\sM}\, \longrightarrow\, M
\end{equation}
be the natural morphism to the earlier defined moduli space $M$;
this $M$ is a good moduli space 
for $\sM$ in the sense of \cite[p. 10, Definition 4.1]{jarod}.

The following notation and comments will be used.
\begin{enumerate}
\item The degree of $\sL$ will be denoted by $d$, and
${\rm gcd}(r,d)$ will be denoted by $n$.

\item Let $M_0\,\subset\, M$ 
(respectively, $\sM_0\subset \sM$) be the Zariski open subset 
(respectively, sub-stack) parameterizing stable bundles. We note
that $M_0$ is contained in the smooth locus of $M$. In fact
$M_0$ coincides with the smooth locus of $M$ except for the only
case of $g\,=\, r\,=\, d
\,=\, 2$ (in this case $M$ is known to be smooth).

\item There is a natural inclusion of $\G_m$ as a normal subgroup in the isotropy group of
any point of $\mathcal M$ corresponding to action of $\G_m$ on vector bundles 
given by scalar multiplications. 
For stable vector bundles, the isotropy group 
coincides $\G_m$. Let
$$
F\,:\,\sM \,\longrightarrow\, \sN
$$
be a $1$-morphism obtained by rigidifying
$\G_m$ (see \cite[pp. 3572--3573, Theorem 5.1.5]{daa}), meaning
$F$ a $1$-morphism defining a $\G_m$--gerbe
such that for every point $z$ 
of $\sM$, the kernel of the homomorphism induced between the
isotropy groups at $z$ and $F(z)$ is precisely $\G_m$. $\sN$ is a smooth stack which is generically a scheme.

\item Choose a stable vector bundle $W$ of rank $r/n$ and degree $d/n$
on $C$. Let $z_0'$ be 
the $k$--point of $\sN$ which corresponds to the 
vector bundle
$$
E_0\,=\, W^{\oplus n}\, .
$$
We let $z_0$ be the image of $z_0'$ in $M$; this $z_0$ is also a 
$k$--point.

\item Let $\pi\,:\,\widetilde{M}\,\longrightarrow\, M$ be a 
desingularization which is an 
isomorphism outside the singular locus of $M$; in particular, it is an 
isomorphism over $M_0$. 
We thus have the following 
diagram.
$$\xymatrix{
   & \sM\ar[d] \\
  & \sN \ar[d]^\theta \\
  \widetilde{M}\ar[r]_{\pi} & M
}
$$
\end{enumerate}

\begin{rmk}
Since the Brauer group is a birational invariant for smooth projective 
varieties, proving Theorem \ref{mainthm} for one particular
desingularization of $M$ is equivalent to proving it for all
desingularizations of $M$. Thus it is enough to prove Theorem
\ref{mainthm} for the desingularization $\widetilde{M}$ chosen in (5). 
\end{rmk}

\begin{lem}\label{curve}
Given any $K$--point $x_0\, \in \,\sN(K)$, where $K/k$ is
a field extension,
there exists a smooth curve $Y/K$, a $K$--point $y_0\,\in\, Y(K)$, and
a map $$\psi\,:\,Y\,\longrightarrow\, \sN\, ,$$ such that $\psi(y_0)\,
=\, x_0$ and $\psi(Y)\bigcap M_0\,\neq\, \emptyset$.
\end{lem}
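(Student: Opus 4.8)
The plan is to reduce the statement to a routine Bertini construction on a smooth scheme; the only real difficulty is to lift the point $x_0$ off the stack $\sN$ to such a scheme \emph{over the same field} $K$.

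First, it suffices to produce $(Y,y_0,\psi)$ with $\psi(Y)$ meeting the stable locus $\sN_0\,=\,\theta^{-1}(M_0)$, since $\sN_0$ maps into $M_0$. Recall that $\sM$ is smooth and irreducible ($M$ is unirational, hence irreducible, and $p\,:\,\sM\,\to\,M$ is surjective with connected fibres), that it is a quotient stack $[R/{\rm GL}_N]$ for a quasi-projective $k$-scheme $R$ coming from the usual Quot-scheme construction, and that the central $\G_m\,\subset\,{\rm GL}_N$ (the scalar automorphisms of bundles) acts trivially on $R$; rigidifying by it identifies $\sN$ with $[R/{\rm PGL}_N]$, so $R\,\to\,\sN$ is a ${\rm PGL}_N$-torsor, $\sN$ is smooth and irreducible, and $\sN_0$ is a dense open substack.

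The key step is to lift $x_0\,\in\,\sN(K)$ to a $K$-point of some scheme smooth over $\sN$. The tautological atlas $R\,\to\,\sN$ does not do this directly, because $R\times_{\sN}\Spec(K)\,\to\,\Spec(K)$ is a ${\rm PGL}_N$-torsor which may well be nontrivial -- this is, in essence, exactly the nontriviality the whole paper is about. One gets around this by enlarging the structure group to a special one: take $V\,=\,\mathfrak{gl}_N$ with the (faithful) adjoint action of ${\rm PGL}_N$, and form the associated bundle $\widetilde Z\,=\,\left(R\times {\rm GL}(V)\right)/{\rm PGL}_N$ for the free diagonal action. Then $\widetilde Z$ is a smooth quasi-projective $k$-scheme, irreducible because $\sN$ is, and $\widetilde Z\,\to\,\sN$ is a ${\rm GL}(V)$-torsor; by Hilbert's Theorem 90 its pullback along $x_0$ is the trivial torsor, so $x_0$ lifts to a point $\widetilde x_0\,\in\,\widetilde Z(K)$. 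Moreover $\widetilde Z_0\,:=\,\widetilde Z\times_{\sN}\sN_0$ is a dense open subset of $\widetilde Z$.

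The rest is standard. Base changing to $K$, the variety $\widetilde Z_K$ is smooth, irreducible (geometrically so, since $k$ is algebraically closed) and of dimension $\geq 2$, the point $\widetilde x_0$ is rational and hence smooth on it, and $(\widetilde Z_0)_K$ is a dense open subset. Since $K$ is infinite, a general complete-intersection curve $D$ through $\widetilde x_0$ is irreducible, smooth at $\widetilde x_0$, and meets $(\widetilde Z_0)_K$; deleting its finitely many singular points yields an irreducible smooth $K$-curve $Y$ containing $y_0\,:=\,\widetilde x_0$ and still meeting $(\widetilde Z_0)_K$. The composite $Y\,\inj\,\widetilde Z_K\,\to\,\widetilde Z\,\to\,\sN$ is the required $\psi$: $\psi(y_0)\,=\,x_0$, and $\psi(Y)$ meets $\sN_0$, hence $M_0$. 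I expect the only genuine obstacle to be the middle step: replacing ${\rm PGL}_N$ by ${\rm GL}(V)$ -- equivalently, using that $\sN$, a smooth quotient stack with affine diagonal, has the resolution property -- is precisely what lets one lift $x_0$ to a scheme over $K$; without it the naive ``take a general curve through $x_0$'' has no meaning, since there need be no smooth $K$-scheme over $\sN$ with a point over $x_0$.
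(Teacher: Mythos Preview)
Your proof is correct and follows the same two-step strategy as the paper: lift $x_0$ to a $K$-point of a smooth scheme over $\sN$, then cut down by a general complete-intersection curve through that point. The only difference is in how the lift is obtained. The paper simply cites \cite[Th\'eor\`eme~6.3]{laumon}, which produces a smooth atlas $U\to\sN$ together with a $K$-point of $U$ over $x_0$; you instead construct such an atlas by hand, pushing the ${\rm PGL}_N$-torsor $R\to\sN=[R/{\rm PGL}_N]$ forward along the adjoint embedding ${\rm PGL}_N\hookrightarrow GL(\mathfrak{gl}_N)$ and invoking Hilbert~90 to trivialize the resulting $GL(V)$-torsor over $\Spec(K)$. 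This is not a genuinely different idea---it is essentially the mechanism behind the cited theorem, specialized to a global quotient---so your argument can be read as an unpacking of the paper's one-line citation. One small technical point: the assertion that $\widetilde Z=(R\times GL(V))/{\rm PGL}_N$ is a quasi-projective \emph{scheme} (rather than just an algebraic space) deserves a sentence of justification, for instance via GIT using that the diagonal ${\rm PGL}_N$-action is free, proper, and linearizable; alternatively the Bertini step goes through verbatim for smooth algebraic spaces after passing to an \'etale scheme chart around $\widetilde x_0$.
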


\begin{proof}
By \cite[p. 49, Th\'eor\`eme 6.3]{laumon}, we can choose an
atlas $\pi\,:\,U\, \longrightarrow
\, \sN$ such that $x_0$ lifts to a $K$--point $\widetilde{x}_0$ of
$U$. Since $\sN/k$ is smooth, so is $U/k$. Thus, a
general complete intersection curve in $U/k$ passing
through $\widetilde{x}_0$ satisfies the conditions.
\end{proof}

\begin{lem}\label{chooseK}
Fix any integer $n\, \geq\, 2$.
There exists a field extension $K/k$, a $k$--discrete valuation $v$ on 
$K$, and a central division algebra $D/K$ of index $n$, such that for 
any integer $\ell$, the class $\ell\cdot[D]\, \in\, {\rm Br}(K)$ is 
unramified at $v$ if and only if $\ell$ is divisible by $n$. (Here
$[D]$ denotes the class in ${\rm Br}(K)$ defined by $D$.)
\end{lem}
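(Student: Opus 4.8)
The plan is to produce $K$ as a field of iterated Laurent series (or equivalently the function field of a suitable tower) so that the desired division algebra comes from a symbol algebra whose ramification is easy to compute. Concretely, I would start with a field $k_0$ containing a primitive $n$-th root of unity $\zeta$ — since $k$ has characteristic zero and is algebraically closed, $k$ itself works — and set $K\,=\,k((s))((t))$, the field of Laurent series in $t$ over the field of Laurent series in $s$. Let $v$ be the $t$-adic discrete valuation on $K$; its residue field is $k((s))$, and $v$ is trivial on $k$, hence a $k$-discrete valuation. Now take $D$ to be the cyclic (symbol) algebra $(s,t)_{\zeta,n}$ of degree $n$ over $K$, generated by $x,y$ with $x^n=s$, $y^n=t$, $yx=\zeta xy$. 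One knows $[D]$ has index (and order) exactly $n$ in $\mathrm{Br}(K)$: this is classical for such Laurent-series symbol algebras, and can be seen via the two successive residue maps, or by the fact that $k((s))$ has a unique degree-$m$ extension for each $m$ so the symbol $(s,t)$ is nontrivial of the expected order.

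The key computation is the behaviour of $\ell\cdot[D]$ under the ramification map $\partial_v\,:\,\mathrm{Br}(K)\longrightarrow H^1(k((s)),\Q/\Z)$ at $v$. For a symbol algebra one has the standard formula $\partial_v\big((s,t)_{\zeta,n}\big)\,=\,\overline{s}\cdot\tfrac1n\,\in\, k((s))^\times/(k((s))^\times)^n\,=\,H^1(k((s)),\tfrac1n\Z/\Z)$, using that $v(s)=0$ and $v(t)=1$; here $\overline{s}$ is the class of $s$ in $k((s))^\times/(k((s))^\times)^n$, which is the class of the unramified extension $k((s^{1/n}))/k((s))$ and therefore has order exactly $n$ because $k((s))$ admits a degree-$n$ (Kummer) extension. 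Consequently $\partial_v(\ell\cdot[D])$ is the class of $s^{\ell}$ modulo $n$-th powers, which is trivial precisely when $n\mid\ell$. Thus $\ell\cdot[D]$ is unramified at $v$ if and only if $n\mid\ell$, which is exactly the assertion.

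The one point that needs care — and the only place where a small argument rather than a quotation is required — is checking that $[D]$ itself genuinely has index $n$, i.e. that the symbol $(s,t)$ is not merely nonzero but of full order $n$; a priori the ramification computation only shows its order is a multiple of the order of $\partial_v[D]$, which I have just shown is $n$, so in fact the ramification computation \emph{also} delivers this lower bound, and $[D]$ being split by the degree-$n$ field $K(s^{1/n})$ gives the matching upper bound. So the main (minor) obstacle — nailing down $\mathrm{ind}(D)=n$ — is resolved by the very residue calculation used for the main claim, and no separate input beyond the structure of the Brauer group of a Laurent series field over an algebraically closed field is needed. I would remark that one may equally replace $k((s))((t))$ by $k(s,t)$ or by the function field of any surface carrying the corresponding Azumaya data; the Laurent-series model is chosen only because the residue maps are most transparent there.
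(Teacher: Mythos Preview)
Your argument is correct and follows essentially the same route as the paper: build a degree-$n$ symbol algebra over a two-variable field and read off the ramification via the tame symbol/residue map, noting that the image has exact order $n$; the paper does this over $K=k(x,y)$ with the $(x)$-adic valuation, while you use the iterated Laurent series field $K=k((s))((t))$ with the $t$-adic valuation, a cosmetic difference you yourself flag at the end. Your explicit justification that $\mathrm{ind}(D)=n$ (period $\ge n$ from the residue, index $\le n$ from the splitting field $K(s^{1/n})$, and period $\mid$ index) is in fact a bit more detailed than the paper's one-line assertion that order equals index.
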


\begin{proof}
Set $K\,=\, k(x,y)$ to be the purely transcendental extension. Let $v$ 
be the valuation given by the height one prime ideal 
$$ (x) \,\subset\, k[x,y]\, $$
and let $L=k(y)$ denote the residue field of $K$ at $v$.
Set $D$ to be the cyclic algebra $(x,y)_{\zeta}$, 
where $\zeta$ is any chosen primitive $n$--th root of unity. The
obstruction for an $n$--torsion class in ${\rm Br}(K)$ to
be unramified is measured by the tame symbol 
$$
H^2(K,\mu_n)\, \longrightarrow\, H^1(L,\, \Z/n)\, \cong \,L^*/L^{*n}\, .
$$
Here we identify $H^2(K,\,\mu_n)$ with the $n$--torsion subgroup of
${\rm Br}(K)$. Note that the isomorphism $H^1(L,\Z/n)\cong L^*/L^{*n}$ depends on the choice of $\zeta$. 
The image of $[D]$ in $L^*/L^{*n}$ is the class
defined by $y^{-1}$ (see \cite[Example 7.1.5 and Corollary 7.5.3]{gille}) which has
order $n$. Hence $\ell\cdot[D]$ is unramified at $v$
if and only if $\ell$ is divisible by $n$. Moreover, since the order of $[D]$ is equal to its index, $D$ is a division algebra.
\end{proof}

\begin{lem}\label{brbound}
There is a natural inclusion ${\rm Br}(\widetilde{M})\,\inj\, 
{\rm Br}(M_0)$, where $M_0$ is defined in (2).
\end{lem}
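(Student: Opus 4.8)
The plan is to deduce the inclusion from the standard fact that for a regular integral scheme the Brauer group embeds into the Brauer group of its function field.

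First I would use the choice made in item (5): the desingularization $\pi\colon\widetilde{M}\to M$ restricts to an isomorphism $\pi^{-1}(M_0)\,\xrightarrow{\ \sim\ }\,M_0$, so that $M_0$ may be regarded as a Zariski open subscheme $j\colon M_0\inj\widetilde{M}$. Since $M$ is irreducible and $M_0$ is a nonempty open subset of $M$ --- the stable locus being nonempty for $g\geq 2$ --- the open set $M_0$ is dense in $\widetilde{M}$, and in particular $k(M_0)=k(\widetilde{M})$.

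Next, since $k$ has characteristic zero and $\widetilde{M}$ is smooth over $k$, both $\widetilde{M}$ and its open subscheme $M_0$ are regular integral varieties. I would then invoke Grothendieck's theorem on the Brauer group of a regular scheme (equivalently, Auslander--Goldman for the Azumaya Brauer group, the two notions coinciding for smooth varieties) to conclude that the restriction-to-the-generic-point maps ${\rm Br}(\widetilde{M})\to{\rm Br}(k(\widetilde{M}))$ and ${\rm Br}(M_0)\to{\rm Br}(k(M_0))$ are both injective. Since the composite ${\rm Br}(\widetilde{M})\xrightarrow{\,j^*\,}{\rm Br}(M_0)\to{\rm Br}(k(M_0))={\rm Br}(k(\widetilde{M}))$ is precisely the first of these injective maps, it follows that $j^*$ is injective, and this $j^*$ is the asserted natural inclusion.

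I do not expect a genuine obstacle here: the only points needing care are that $\pi$ is an isomorphism over the whole of $M_0$, which is built into the choice of $\widetilde{M}$ in (5), and that $M_0\neq\emptyset$, so that the density claim holds; the rest is functoriality of the Brauer group together with Grothendieck's injectivity statement for regular schemes.
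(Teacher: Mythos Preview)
Your proof is correct and follows essentially the same route as the paper. The paper simply asserts that, because $\widetilde{M}$ is smooth, restriction to the dense open $\pi^{-1}(M_0)\cong M_0$ is injective on Brauer groups; you spell out the standard reason for this injectivity by factoring through ${\rm Br}(k(\widetilde{M}))$ via Grothendieck's theorem, which is exactly the fact underlying the paper's one-line claim.
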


\begin{proof}
Since $\widetilde{M}$ is smooth, the homomorphism ${\rm Br}
(\widetilde{M})\,\longrightarrow\, {\rm Br}(\pi^{-1}(M_0))$ induced
by the open embedding $\pi^{-1}(M_0)\,\hookrightarrow\, \widetilde{M}$ is
injective. Now the lemma follows immediately from the assumption that 
the morphism
$$
\pi\vert_{\pi^{-1}(M_0)}\,:\,\pi^{-1}(M_0)\,\longrightarrow\, M_0
$$
is an isomorphism.
\end{proof}

\section{Proof of Theorem \ref{mainthm}}

If $g\, =\, 2\, =\, r$, and $d$ is even, then $M\, =\, {\mathbb P}^3_k$
\cite[pp. 33--34, Theorem 2]{NR}; hence
Theorem \ref{mainthm} holds in this case.
If $g\, =\, 2\, =\, r$, and $d$ is odd, then $M$ is a smooth projective 
rational variety
as $n\,=\, 1$ (see (1) of Section \ref{se2});
so Theorem \ref{mainthm} holds in this case also. Hence we assume that
$g\, \geq\, 3$ if $r\, =\, 2$.

Consider the $\G_m$--gerbe $\sM\,\longrightarrow\, \sN$ in (3)
of Section \ref{se2}. Let
$$
\alpha\,\in\, {\rm Br}(\sN)
$$
be the class defined by it. Since 
$\theta\, :\, \sN\,\longrightarrow\, M$
is an isomorphism over $M_0$, we
consider $M_0$ also as an open subset of $\sN$. Thus
\begin{equation}\label{ap}
\alpha'\,:=\,\alpha\vert_{M_0}
\end{equation}
defines an element of ${\rm Br}(M_0)$. This class $\alpha'\,\in\,
{\rm Br}(M_0)$ generates ${\rm Br}(M_0)$, and its
order is precisely $n$ \cite[p. 267, Theorem 1.8]{bbgn}.

Since $M_0$ can also be identified with an open subset of 
$\widetilde{M}$ (see (5) of Section \ref{se2}), it makes 
sense to ask whether a class in ${\rm Br}(M_0)$ extends to a class in 
${\rm Br}(\widetilde{M})$. In view of Lemma \ref{brbound}, and the above
description of ${\rm Br}(M_0)$, in order to prove Theorem 
\ref{mainthm} it suffices to show the following:

\medskip
\textbf{Statement A.}\, \textit{For a given integer 
$\ell$, the class $\ell\alpha'$ extends to an element of ${\rm 
Br}(\widetilde{M})$ only if $\ell$ is a multiple of $n$ (or equivalently if $\ell\alpha'$ vanishes).} 
\medskip

We will prove Statement A in three steps.

\medskip
\noindent
\underline{Step $1$}:\, Let $K/k$ be a field extension, and let $D/K$ 
be a central division algebra of index $n$, given by Lemma 
\ref{chooseK}. Let $X\,\longrightarrow\, \Spec(K)$ be the
$\G_m$--gerbe defined by the 
class $[D]\,\in\, H^2(K,\G_m)$. Then there exists a twisted bundle $V$ of rank $n$ 
on $X$ such that $\sE nd_{\sO_X}(V)$ descends to the coherent sheaf
on $\Spec(K)$ corresponding to $D$. Consider the vector bundle 
$$F\,:= \, V\boxtimes W $$
on $X\times_kC$, where $W$ is the vector bundle in (4)
of Section \ref{se2}. This $F$ 
can be thought of as a family of semistable 
vector bundles on $C$ parameterized by $X$. Hence we get a morphism 
$$
f\, :\, X\,\longrightarrow\, \sM
$$
representing this family. This 
morphism $f$ induces a morphism 
\begin{equation}\label{phi}
x_0\,:\, \Spec(K) \,\longrightarrow\, \sN\, .
\end{equation}
By the construction of $x_0$, it has the following properties:
\begin{enumerate}
 \item $x_0^*\alpha\, =\, [D]$, and
 \item the image of $x_0$ is equivalent to the point $z_0'$ that 
corresponds to $E_0$ in (4) of Section \ref{se2}.
\end{enumerate}

\medskip

\noindent \underline{Step $2$}:\, Using Lemma \ref{curve} we
find a smooth curve $Y/K$, a $1$-morphism $\psi\,:\,Y\,\longrightarrow
\, \sN$ and $y_0\,\in\, Y(K)$, such that $$\psi(y_0)\,=\, x_0\, .$$
Consider the map from $Y\to M$ given by the composition $Y\to \sN 
\stackrel{\theta}{\to} M$. By 
Lemma \ref{curve},
the generic point of $Y$ maps into $M_0\subset M$ and hence lifts to $\widetilde{M}$. Since 
$$\pi:\widetilde{M}\to M$$ is proper, by using the valuative criterion of 
properness we see that the map $Y\to M$ 
factors through $\widetilde{M}$.  \\

We thus have a commutative diagram 
$$\xymatrix{
Y \ar[r]^{\psi}\ar[d]_h & \sN \ar[d]^{\theta} \\
\widetilde{M}\ar[r]^{\pi} & M 
}$$

\medskip
\noindent \underline{Step $3$}:\, Now suppose for some integer $\ell$, 
the class $\ell\cdot\alpha'\, \in\, {\rm Br}(M_0)$ (see \eqref{ap}) 
extends to a class $\beta$ on entire $\widetilde{M}$. 
The two classes $\ell\cdot\alpha\,\in\, {\rm Br}(\sN)$ and 
$\beta\,\in\, {\rm Br}(\widetilde{M})$ coincide when restricted to
$M_0$, which is canonically identified with an open subset of
$\sN$ as well as of $\widetilde{M}$.

We claim that 
\begin{equation}\label{eq}
\psi^* (\ell\cdot\alpha)\,=\, h^*\beta
\, \in\, {\rm Br}(Y)\, .
\end{equation}

The claim follows since $Y$ is a regular integral scheme and the above 
Brauer classes coincide on the dense open subset $h^{-1}(M_0)$ of $Y$.

Thus, restricting the above classes in $Br(Y)$ to the $K$-point 
$y_0:\Spec(K)\to Y$, and using $\psi\circ y_0=x_0$, we get that
$$x_0^*(\ell\cdot \alpha) \,=\, y_0^*(h^*\beta)
\,\in\, {\rm Br}(K)\, .$$

However, $\widetilde{M}$ being a proper variety, the morphism
$$  y_0\circ h \,:\,\Spec(K)\,\longrightarrow
\, \widetilde{M}$$
extends to a morphism
$$
\Spec(R)\,\longrightarrow\, \widetilde{M}\, ,
$$
where $R$ is the discrete valuation ring corresponding to the valuation
$v$ on $K$. Thus the class $y_0^*(h^*\beta) \,\in\, {\rm
Br}(K)$ is unramified at $v$.
But by construction of $x_0$ in Step 1, the class
$x_0^*(\ell\cdot\alpha)$ coincides with $\ell\cdot[D]$, which, by
Lemma \ref{chooseK}, is ramified at $v$ unless $\ell$ is
divisible by $n$. This implies that $\ell$ is divisible by $n$.
This proves Statement A, and hence the proof
of Theorem \ref{mainthm} is complete.

\begin{rmk} Since the moduli space $M$ is locally factorial, it
follows that the natural homomorphism ${\rm Br}(M)\,\longrightarrow
\, {\rm Br}(M_0)\,$ is injective. From Statement A we know
that no nonzero class $\ell \alpha'\, \in\, {\rm Br}(M_0)$
extends to ${\rm Br}(\widetilde {M})$, hence such a class
cannot extend to ${\rm Br}(M)$. Consequently,
the Brauer group of the moduli space $M$ vanishes.
\end{rmk}

\end{document}